\documentclass[12pt]{article}
\textheight=20cm \textwidth=14cm

\usepackage[affil-it]{authblk}
\usepackage{amsmath} 
\usepackage{amssymb} 
\usepackage{graphicx} 
\usepackage{enumerate}
\usepackage[utf8]{inputenc}
\usepackage{amsthm}
\usepackage{color}

\newtheorem{thm}{Theorem}

\newcommand{\F}{\mathbb{F}}
\newcommand{\Fq}{\mathbb{F}_q}

\begin{document}

\title{Carlitz Rank and Index of Permutation Polynomials}

\date{}
\author{Leyla I\c s\i k$^1$, Arne Winterhof$^2$, }

\maketitle

\noindent
$^1$ Salzburg University, Hellbrunnerstr.\ 34,
5020 Salzburg, Austria\\
E-mail: leyla.isik@sbg.ac.at\\

\noindent
$^2$ Johann Radon Institute for Computational and Applied Mathematics\\
Austrian Academy of Sciences, Altenbergerstr.\ 69, 4040 Linz, Austria\\
E-mail: arne.winterhof@oeaw.ac.at

\begin{abstract} Carlitz rank and index are two important measures for the complexity of a permutation polynomial
$f(x)$ over the finite field $\F_q$. In particular, 
for cryptographic applications we need both, a high Carlitz rank and a high index. In this article we study the relationship between Carlitz rank $Crk(f)$ and index $Ind(f)$. 
More precisely, if the permutation polynomial is neither close to a polynomial of the form $ax$ nor a rational function of the form $ax^{-1}$, 
then we show that $Crk(f)>q- \max\{3 Ind(f),(3q)^{1/2}\}$. Moreover we show that the permutation polynomial which represents the discrete logarithm guarantees both a large index and a large Carlitz rank.

\end{abstract}

\bigskip

{\bf Keywords}:
Carlitz rank, character sums,  cryptography, finite fields, index, invertibility, linearity, permutation polynomials, cyclotomic mappings, discrete logarithm.

\bigskip

{\bf Mathematical Subject Classification}: 11T06, 11T24, 11T41, 11T71.

\section{Introduction}

In 1953, L. Carlitz \cite{Carlitz53} proved that all permutation polynomials over the finite field $\Fq$
of order $q\ge 3$ are compositions of linear polynomials $ax+b$, $a,b \in \Fq$, $a\neq0$, 
and inversions $x^{q-2}=\left\{\begin{array}{cc} 0, &x=0,\\ x^{-1}, & x\ne 0,\end{array}\right.$ see \cite{Carlitz53} or \cite[Theorem~7.18]{LidNid97}. 
Consequently, any permutation of $\Fq$ can be represented by a polynomial of the form
\begin{equation}\label{eqn:CarlitzRep} 
P_{n}(x)=(\ldots
((c_0x+c_1)^{q-2}+c_2)^{q-2} \ldots +c_n)^{q-2}+c_{n+1},
\end{equation}
where $c_i \neq 0$, for $i = 0,2,\ldots,n$. (Note that $c_{1}c_{n+1}$ can be zero.) This representation is not unique
and $n$ is not necessarily minimal.  We recall that the {\it Carlitz rank} $Crk(f)$ of a 
permutation polynomial $f(x)$ over $\Fq$ is the smallest 
integer $n\ge 0$ satisfying $f(x)=P_{n}(x)$ for a permutation polynomial $P_{n}(x)$ of the
form (\ref{eqn:CarlitzRep}). The Carlitz rank was first introduced in \cite{AkCeMeTo09} and further studied in \cite{GoOsTo14, IsWinTop16}. For a survey see \cite{Topuzoglu}.

In $2009$, Aksoy et al.\ \cite{AkCeMeTo09} showed 
\begin{equation}\label{ineq:deg}
Crk(f)\ge q-\deg(f)-1 ~~~\emph{if}  ~\deg(f)\ge 2. 
\end{equation}
In \cite{GoOsTo14} Gomez-Perez et al.\ gave a similar bound for $Crk(f)$ in terms of the weight $w(f)$ of $f(x)$, that is  the number of its nonzero coefficients. If
$f(x)\neq a+bx^{q-2}$, for all $a,b \in \Fq$, $b\neq 0$, then
\begin{equation}\label{ineq:weight}
Crk(f)>\frac{q}{w(f)+2}-1~~~\emph{if}  ~\deg(f)\ge2.
\end{equation}
In this paper, we study the relationship between the Carlitz rank and the least index of a polynomial introduced in \cite{NidWin05,Wang} as follows.

Let $\ell$ be a positive divisor of
$q-1$ and $\xi$ a primitive element of $\Fq$. Then the set of nonzero $\ell$th powers
$$C_{0}=\left\{\xi^{j\ell} : j=0,1,...,\frac{q-1}{\ell}-1\right\}$$ is a subgroup of $\Fq^{*}=\Fq\setminus\{0\}$ of index $\ell$. The elements of the factor group $\Fq^{*}/C_{0}$ are the {\it cyclotomic cosets}

\begin{equation*}\label{eqn:C_i}
C_{i}=\xi^{i}C_{0}, ~~ i=0,1,...,\ell-1.
\end{equation*}

For any positive integer $r$ and any $a_0, a_1,..., a_{\ell-1} \in \Fq^{*}$, we define the {\it $r$-th order cyclotomic
mapping $f^{r}_{a_0,a_1,...,a_{\ell-1}}$ of index $\ell$} by
\begin{equation}\label{eqn:rth-cycMap}
f^{r}_{a_0,a_1,...,a_{\ell-1}}(x)=\left\{\begin{array}{cl} 0 & \mbox{if $x=0$},\\
                                            a_{i}x^{r} & \mbox{if $x\in C_i$, $0\le i \le \ell-1$}.\\
                                            \end{array}\right.
\end{equation}
For a polynomial $f(x)$ over $\F_q$ with $f(0)=0$ we denote by $Ind(f)$ the smallest index $\ell$ such that $f(x)$ can be represented in the form (\ref{eqn:rth-cycMap}).
The index was introduced in \cite{Wang} based on \cite{NidWin05} and further studied in \cite{muwawa,wawa,wa}.
The index of $f(x)$ with $f(0)\ne 0$ is defined as the index of $f(x)-f(0)$. Since $Crk(f+c)$ is the same as $Crk(f)$ for any $c \in \Fq$ we may restrict ourselves to the case $f(0)=0$. 


By \cite[Theorem~1]{Wang}, $f^{r}_{a_0,a_1,...,a_{\ell-1}}(x)$ is a permutation of $\F_q$ if and only if $\gcd(r,(q-1)/\ell)=1$ and 
$\{a_{0},a_{1}\xi^r,a_{2}\xi^{2r},\dots,a_{\ell-1}\xi^{(\ell-1)r}\}$ is a system of distinct representatives of $\Fq^{*}\setminus C_{0}$.
In particular, $f^{r}_{a_0,a_1,\ldots,a_{\ell-1}}(x)$ is a permutation if $\gcd(r,(q-1)/\ell)=1$ and  all $a_0,a_1,\ldots,a_{\ell-1}$ are in the same cyclotomic coset $C_i$ of order $\ell$.  


Now we define two further measures for the unpredictability of a polynomial.
The {\em linearity} ${\cal L}(f)$ of a polynomial $f(x)$ over $\F_q[x]$ with $f(0)=0$ is 
$${\cal L}(f)=\max_{a\in \F_q^{*}} |\{ c\in \F_q : f(c)=ac\}|$$
and the {\em invertibility} ${\cal I}(f)$ of $f(x)$  is
$${\cal I}(f)=\max_{c\in \Fq^{*}}\left|\left\{x\in \Fq^{*} : f(x)=\frac{c}{x}\right\}\right|.$$

For cryptographic applications we need unpredictable permutation polynomials, see for example \cite{sh}. In particular, 
${\cal L}(f)$ and ${\cal I}(f)$ must both be small, and $Crk(f)$, $\deg(f)$, and $w(f)$ must all be large. 
In Section \ref{Chap:MainResults} we prove a relation between $Crk(f)$ and $Ind(f)$ of the same flavour as (\ref{ineq:deg}) and (\ref{ineq:weight})   
provided that ${\cal L}(f)$ and ${\cal I}(f)$ are not large. 
We improve this result for large index $\ell$ in the special case when the coefficients $a_{0},a_{1},\ldots,a_{\ell-1}$ in (\ref{eqn:rth-cycMap}) 
are all in the same cyclotomic coset of index $\ell$. 

Moreover, in Section \ref{DiscLog} we provide an example of a permutation polynomial of small linearity, small invertibility, 
large degree, large weight, large index and large Carlitz rank. This polynomial represents (up to an additive constant $1$) the discrete logarithm of $\F_p$ for prime $p$.

\section{Preliminary results }

Let $f(x)$ be a permutation polynomial of $\F_q$. Then there exist
$\alpha,\beta,\gamma,\delta\in \F_q$ such that
\begin{equation}\label{rateq} f(c)=\frac{\alpha c+\beta}{\gamma c+\delta} \quad \mbox{for at least $q-Crk(f)$ different elements $c\in \F_q$},
\end{equation}
see \cite[Section~2]{Topuzoglu}. We may assume 
$$\alpha\delta \ne \beta\gamma$$ 
since otherwise $\dfrac{\alpha x+\beta}{\gamma x+\delta}$ is constant and thus $Crk(f)\ge q-1$.

Note that if $\beta =\gamma =0$, then $f(c)=\dfrac{\alpha}{\delta} c$ for at least $q-Crk(f)$ different elements $c \in \Fq$ and thus 
\begin{equation}\label{CarlitzRankL}
Crk(f)\ge q-{\cal L}(f)~~~~\mbox{if}~~\beta=\gamma=0. 
\end{equation}
If $\alpha=\delta=0$, then $f(c)=\dfrac{\beta}{\gamma c}$ for at least $q-Crk(f)$ different $c \in \Fq$ and thus
\begin{equation}\label{CarlitzRankI}
Crk(f)\ge q -{\cal I}(f)~~~~\mbox{if}~~\alpha=\delta=0.
\end{equation}

Now let $f(x)$ be an $r$-th order cyclotomic mapping permutation polynomial of index $\ell$ as defined in (\ref{eqn:rth-cycMap})
and assume that $\beta\ne 0$ or $\gamma\ne 0$ or $r\ge2$. Then
(\ref{rateq}) has at most $(r+1)\ell$ solutions and we get
\begin{equation}\label{rCrkIndex}
Crk(f)\ge q-(r+1)Ind(f), ~~~\mbox{$\beta\ne 0$ or $\gamma \ne 0$}.
\end{equation}
If $\alpha\ne 0$ or $\delta \ne 0$ or $r\le q-3$, then
$$a_{i}c^{r}=\frac{\alpha c +\beta}{\gamma c+ \delta}$$
or equivalently
$$\gamma c+\delta= a_{i}^{-1} (\alpha c^{q-r} +\beta c^{q-r-1})$$  
has at most $q-r$ solutions and thus 
\begin{equation}\label{rCrkIndex2}
Crk(f)\ge q- (q-r) Ind(f),~~~ \mbox{$\alpha\ne 0$ or $\gamma \ne 0$}.
\end{equation}
Collecting (\ref{CarlitzRankL}), (\ref{CarlitzRankI}), (\ref{rCrkIndex}), (\ref{rCrkIndex2}) we get 
\begin{equation*}
Crk(f)\ge q-\max\Big\{\min\{r+1,q-r\}Ind(f), {\cal L}(f), {\cal I}(f)\Big\}.
\end{equation*}

Our goal is to find a similar bound which does not depend on $r$.

\section{Main results}\label{Chap:MainResults}

\begin{thm}\label{thm:main} For any permutation polynomial $f(x)$ we have 
$$Crk(f)> q- \max \big\{3 Ind(f), (3q)^{1/2}, {\cal L} (f),{\cal I}(f) \big\}.$$
\end{thm}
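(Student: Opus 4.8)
The plan is to set $N:=q-Crk(f)$ and to prove $N<\max\{3\,Ind(f),(3q)^{1/2},\mathcal L(f),\mathcal I(f)\}$. By (\ref{rateq}) there is a map $g(c)=\frac{\alpha c+\beta}{\gamma c+\delta}$ with $\alpha\delta\neq\beta\gamma$ agreeing with $f$ on $N$ elements $c$. The two degenerate configurations of the coefficients are already disposed of in the preliminary section: $\beta=\gamma=0$ gives $N\le\mathcal L(f)$ by (\ref{CarlitzRankL}) and $\alpha=\delta=0$ gives $N\le\mathcal I(f)$ by (\ref{CarlitzRankI}). Hence I may assume $(\beta,\gamma)\neq(0,0)$ and $(\alpha,\delta)\neq(0,0)$, so that both degree bounds (\ref{rCrkIndex}) and (\ref{rCrkIndex2}) are available; writing $\ell=Ind(f)$ and fixing a representation (\ref{eqn:rth-cycMap}) with exponent $r$, the whole task is to remove the $r$-dependence of the factor $\min\{r+1,q-r\}$.

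The engine of the $(3q)^{1/2}$ term should be a multiplicative character sum. Writing the agreement condition on a coset $C_i$ as $a_ic^r(\gamma c+\delta)=\alpha c+\beta$ and detecting both the power $c^r$ and the equality by characters, one is led to sums of the shape $\sum_{c}\chi_1(c)\chi_2(\alpha c+\beta)\chi_3(\gamma c+\delta)$ over the three linear forms $c,\ \alpha c+\beta,\ \gamma c+\delta$. These forms have the three distinct roots $0,-\beta/\alpha,-\delta/\gamma$, so Weil's bound gives $2q^{1/2}$ for each nonprincipal sum, and these three ramification points are exactly the source of the constant $3$ in $(3q)^{1/2}$. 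In the case $\ell=1$, i.e. $f(c)=a_0c^r$ a monomial, this is already conclusive: expanding $N=\frac{1}{q-1}\sum_\psi\psi(a_0)\sum_c\psi^r(c)\overline{\psi}(g(c))$, only the principal $\psi$ contributes a main term of size $O(1)$ while the remaining terms are each $\le 2q^{1/2}$, so $N<(3q)^{1/2}$ regardless of how large the exponent $r$ is.

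For general index I would first bring the exponent under control: since replacing $r$ by $r+(q-1)/\ell$ and $a_i$ by $a_i\xi^{-i(q-1)/\ell}$ leaves $f$ unchanged, $r$ is only defined modulo $(q-1)/\ell$, and I may pick the representative with $r\le 2$ or $q-r\le 3$ whenever one exists; in that case (\ref{rCrkIndex}) or (\ref{rCrkIndex2}) immediately yields $N\le 3\ell=3\,Ind(f)$. In the remaining range one detects the cyclotomic cosets by the characters of order $\ell$; the permutation criterion that $\{a_0,a_1\xi^r,\dots,a_{\ell-1}\xi^{(\ell-1)r}\}$ be a system of distinct representatives forces the coset of $g(c)$ to be a fixed permutation of the coset of $c$, which produces a main term of size about $q/\ell$ together with sums of the three-linear-form type above.

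The hard part will be to make this estimate \emph{uniform}, that is, to keep the total error at $(3q)^{1/2}$ rather than the $\approx q^{1/2}\,Ind(f)^{1/2}$ that a naive expansion of $\psi(a_{i(c)})$ into order-$\ell$ characters produces; this loss is precisely what the crossover value $Ind(f)\approx(q/3)^{1/2}$ inside $\max\{3\,Ind(f),(3q)^{1/2}\}$ reflects. I expect to resolve it by exploiting the system-of-distinct-representatives structure of the $a_i\xi^{ir}$, so that the coset-matching step contributes only its main term $q/\ell$ (which is bounded by $3\,Ind(f)$ exactly when $Ind(f)\ge(q/3)^{1/2}$), used in tandem with the degree bounds (\ref{rCrkIndex})--(\ref{rCrkIndex2}) that govern the complementary small-index range. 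Balancing these two regimes is what should deliver the stated maximum.
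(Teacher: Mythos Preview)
Your reduction to the case $(\beta,\gamma)\neq(0,0)$ and $(\alpha,\delta)\neq(0,0)$ is correct, and your observation that $r$ is only determined modulo $(q-1)/\ell$ is also correct. But the proposal has a genuine gap at exactly the point you flag as the ``hard part''.

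The character-sum mechanism you describe is the one the paper uses for Theorem~\ref{thm:samecoset}, where all $a_i$ lie in a single coset of order $\ell$: there one has $\chi(a_i)=\rho$ for every $i$, the dependence on $i$ disappears, and Weil gives the clean $2q^{1/2}$ error. For Theorem~\ref{thm:main} the $a_i$ are arbitrary (subject only to the permutation criterion), and expanding the indicator of $c\in C_i$ by order-$\ell$ characters leaves you with $\ell$ Weil sums weighted by the unrelated constants $\chi^j(a_i)$; the system-of-distinct-representatives condition constrains only the cosets of the $a_i\xi^{ir}$, not their actual values, so there is no cancellation to exploit and you are stuck at a bound of size roughly $q/\ell+\ell\cdot q^{1/2}$ or at best $q^{1/2}\ell^{1/2}$. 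Your hope of ``balancing two regimes'' via (\ref{rCrkIndex})--(\ref{rCrkIndex2}) also does not close the gap: reducing $r$ modulo $(q-1)/\ell$ only guarantees $\min\{r+1,q-r\}\le (q-1)/\ell+O(1)$, which yields $N\lesssim q$, not $N\le 3\ell$.

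The paper's proof of Theorem~\ref{thm:main} avoids characters entirely. Writing $N=\sum_{i=0}^{\ell-1}N_i$ with $N_i$ the contribution from $C_i$, Cauchy--Schwarz gives $N^2\le \ell M$ with $M=\sum_i N_i^2$. The key observation is that $M$ counts triples $(\xi^i,y,z)$ with $y,z\in C_0$ such that both $\xi^iy$ and $\xi^iyz$ satisfy $a_i c^r=R(c)$; dividing the two relations eliminates $a_i$ and yields $R(\xi^iyz)=z^r R(\xi^iy)$, a \emph{quadratic} equation in the single unknown $\xi^iy$ for each fixed $z\neq 1$ (the coefficient of the quadratic term is $\alpha\gamma(z^{r+1}-z)$, nonzero since $z\in C_0$ and $\gcd(r,(q-1)/\ell)=1$). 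This gives $M\le N+2((q-1)/\ell-1)$, hence $N^2<\ell N+2q$, and therefore $N<\max\{3\ell,(3q)^{1/2}\}$. The exponent $r$ never needs to be controlled; it disappears in the division step.
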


\begin{proof} By (\ref{CarlitzRankL}) and (\ref{CarlitzRankI}) we may restrict ourselves to the case when neither $\beta=\gamma=0$ nor $\alpha=\delta= 0$. 

Put $\ell=Ind(f)$, that is, $f(x)$ is of the form (\ref{eqn:rth-cycMap}).

Let $N$ be the number of solutions $c\in \F_q$ of 
$$f(c)=\dfrac{\alpha c+\beta}{\gamma c+ \delta}.$$
Put $R(c)=\frac{\alpha c +\beta}{\gamma c+ \delta}$ and note that $N$ is the cardinality of the set
$${\cal N} =\big\{(\xi^i,y): i=0,1,\ldots,\ell-1, y\in  C_{0} \mbox{ with }a_i\xi^{ir} y^{r}=R(\xi^i y)\big \}.$$
Let $N_i$ be the contribution to $N$ for fixed $i=0,1,\ldots,\ell-1$. Then by the Cauchy-Schwarz inequality we have 
$$N^2=\left(\sum_{i=0}^{\ell -1} N_i\right)^{2}\le \ell M,$$ 
where
$$M= \sum_{i=0}^{\ell -1} N_i^{2}.$$
Now we have 
\begin{eqnarray*} M&=&\big|\{(\xi^i,y,z) : i=0,1,\ldots,\ell-1, y,z\in C_0\\ 
&& \quad \mbox{ with }  a_i\xi^{ir}y^r=R(\xi^iy)  \mbox{ and } a_i\xi^{ir}y^{r}z^{r}=R(\xi^iyz) \}\big|.\end{eqnarray*}
Then
$$R(\xi^i yz)=a_i\xi^{ir}y^{r}z^r=z^{r}R(\xi^iy),$$
that is,
\begin{equation*}
\frac{\alpha \xi^iyz + \beta}{\gamma \xi^iyz +\delta}=z^r \frac{\alpha \xi^iy +\beta}{\gamma \xi^iy+ \delta},
\end{equation*}
which implies
\begin{equation}\label{eqn:Long}
\alpha \gamma \xi^{2i}y^{2}z + (\alpha \delta z + \beta \gamma)\xi^iy+\beta \delta=  \alpha \gamma \xi^{2i}y^2 z^{r+1}+(\alpha \delta  + \beta \gamma z)\xi^iyz^r +\beta \delta z^r.
\end{equation}

First we consider the case $\alpha \gamma \ne 0$. We note that $z^{r+1}=z$ implies $z=1$ since $z \in C_{0}$ and $\gcd(r,\frac{q-1}{\ell})=1$. Therefore for the case $z=1$, (\ref{eqn:Long}) is true for all $N$ pairs $(\xi^i,y)\in {\cal N}$. 
For each $z\ne 1$ there are at most two solutions $\xi^iy$. Thus $$M\le N+2\left(\frac{q-1}{\ell}-1\right),$$ which implies $$N^2< \ell N+2q\le 3 \max \{\ell N, q\}.$$
Hence we get
\begin{equation}\label{eqn:max1}
N<\max \{3\ell, (3q)^{1/2}\}.
\end{equation}
Now we suppose that $\alpha \gamma = 0$. First consider the case $\alpha=0$. Note that here $\delta\ne 0$ and  $\beta\gamma \ne 0$.
Then we have  $\beta\gamma \xi^iy +\beta \delta =\beta \gamma \xi^iy  z^{r+1}  + \beta \delta z^{r}$. This equation has at most one solution $\xi^iy$ for each $z\ne 1$ and $N$ solutions if $z= 1$. Therefore
$$M\le N+\Big(\frac{q-1}{\ell}-1\Big),$$ which implies 
\begin{equation}\label{eqn:max2}
N< \max \{ 2 \ell, (2q)^{1/2}\}.
\end{equation} 
The case $\gamma=0$ is similar. 
Now the result follows by (\ref{rateq}), (\ref{eqn:max1}) and (\ref{eqn:max2}).
\end{proof}

Remark. 
\cite[Theorem~1]{NidWin05} implies that $\deg(f) \ge \dfrac{q-1}{Ind(f)}+1$ and the right hand side of (\ref{ineq:deg}) is at most $q-\dfrac{q-1}{Ind(f)}-2$. 
Hence Theorem \ref{thm:main}  improves (\ref{ineq:deg}) if $\max\{Ind(f), {\cal L}(f), {\cal I}(f)\}\le (q/3)^{1/2}$ and (\ref{ineq:weight}) 
if $\max\{Ind(f), {\cal L}(f), {\cal I}(f)\}\le 2q/9$.\\

We note that the following theorem gives a better bound whenever $Ind(f) > q^{1/2}$ provided that all coefficients $a_{i}$ are in the same cyclotomic coset. 

\begin{thm}\label{thm:samecoset}
Let $f(x)$ be a permutation polynomial of $\F_q$ of index $\ell$ of the form (\ref{eqn:rth-cycMap}) with all $a_i$ in the same coset of order $\ell$. Then 
$$Crk(f)\ge q-\max\{ 3 q^{1/2}, {\cal L} (f),{\cal I}(f) \}.$$
\end{thm}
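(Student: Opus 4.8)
The plan is to follow the proof of Theorem~\ref{thm:main} and reduce the statement to an upper bound for the number $N$ of $c\in\Fq$ with $f(c)=R(c)$, where $R(c)=\frac{\alpha c+\beta}{\gamma c+\delta}$ is the function from (\ref{rateq}); since $Crk(f)\ge q-N$, it is enough to prove $N\le\max\{3q^{1/2},{\cal L}(f),{\cal I}(f)\}$. The cases $\beta=\gamma=0$ and $\alpha=\delta=0$ are already settled by (\ref{CarlitzRankL}) and (\ref{CarlitzRankI}), so I may assume that neither holds and, as in Theorem~\ref{thm:main}, that $\alpha\delta\ne\beta\gamma$. I would then distinguish two ranges of $\ell=Ind(f)$.

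\textbf{Small index.} For $\ell\le q^{1/2}$ nothing new is needed: Theorem~\ref{thm:main} gives $Crk(f)>q-\max\{3\,Ind(f),(3q)^{1/2},{\cal L}(f),{\cal I}(f)\}$, and because $3\ell\le 3q^{1/2}$ and $(3q)^{1/2}\le 3q^{1/2}$, the right-hand side is at least $q-\max\{3q^{1/2},{\cal L}(f),{\cal I}(f)\}$. So the real content lies in the range $\ell>q^{1/2}$, where the term $3\ell$ of Theorem~\ref{thm:main} is too large.

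\textbf{Large index.} For $\ell>q^{1/2}$ I would exploit the hypothesis that all $a_i$ lie in a single coset $C_s$. For $x\in C_i$ we have $f(x)=a_ix^r$ with $a_i\in C_s$, hence $\xi^{-s}f(x)x^{-r}\in C_0$ for every $x\in\Fq^{*}$. Thus every nonzero solution $c$ of $f(c)=R(c)$ satisfies $g(c)\in C_0$, where $g(x)=\xi^{-s}\frac{\alpha x+\beta}{x^{r}(\gamma x+\delta)}$, and therefore $N\le 1+\#\{x\in\Fq^{*}:g(x)\in C_0\}$. Fixing a multiplicative character $\chi$ of order $\ell$, so that $C_0=\ker\chi$, I would detect membership in $C_0$ by orthogonality and write this count as $\frac1\ell\sum_{k=0}^{\ell-1}\sum_{x\in\Fq^{*}}\chi^{k}(g(x))$. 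The term $k=0$ contributes at most $(q-1)/\ell$. For $k\ne0$, since $\xi^{-s}$ only introduces a unimodular factor, $\big|\sum_x\chi^{k}(g(x))\big|=\big|\sum_x\chi^{k}(h(x))\big|$ with the polynomial $h(x)=(\alpha x+\beta)\big[x^{r}(\gamma x+\delta)\big]^{\ell_k-1}$, where $\ell_k$ is the order of $\chi^{k}$; the distinct zeros and poles of $g$ lie among $0,-\beta/\alpha,-\delta/\gamma$, so Weil's bound gives $\big|\sum_x\chi^{k}(g(x))\big|\le 2q^{1/2}$. Summing over $k$ yields $N\le q/\ell+2q^{1/2}+1<3q^{1/2}$, using $q/\ell<q^{1/2}$; hence $Crk(f)\ge q-N\ge q-3q^{1/2}$.

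\textbf{Main obstacle.} The delicate point is verifying the hypothesis of Weil's theorem, namely that for $k\ne0$ the polynomial $h$ is \emph{not} a perfect $\ell_k$-th power, as otherwise the character sum need not be $O(q^{1/2})$. The key observation is that under the standing assumptions $g$ always has a simple (order-one) zero or pole at a finite nonzero point: if $\alpha\beta\ne0$ there is a simple zero at $-\beta/\alpha$, while otherwise the excluded configurations $\beta=\gamma=0$, $\alpha=\delta=0$ together with $\alpha\delta\ne\beta\gamma$ force $\gamma\delta\ne0$ and leave a simple pole at $-\delta/\gamma$. A simple zero or pole makes $\ell_k\mid1$, which is impossible for $\ell_k>1$, so $h$ is never a perfect power. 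Carrying this short case analysis through the vanishing patterns of $\alpha,\beta,\gamma,\delta$, and accounting for the $O(1)$ contributions of $x=0$ and of the pole of $R$ (the stray $+1$ occurs only when $\beta=0$, a case in which $g$ has one fewer zero or pole and Weil's bound improves, so that $N\le 3q^{1/2}$ throughout), is the only genuinely technical part; the remainder is orthogonality and bookkeeping.
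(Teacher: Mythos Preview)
Your proposal is correct and follows essentially the same route as the paper: reduce to $\ell>q^{1/2}$ via Theorem~\ref{thm:main}, dispose of the cases $\beta=\gamma=0$ and $\alpha=\delta=0$ by (\ref{CarlitzRankL}) and (\ref{CarlitzRankI}), then use orthogonality of a multiplicative character $\chi$ of order $\ell$ together with the common-coset hypothesis to reduce bounding $N$ to a Weil estimate, obtaining $N\le q/\ell+2q^{1/2}\le 3q^{1/2}$. The only notable difference is that you spell out the verification that the relevant rational function is not a perfect $\ell_k$-th power (via the simple zero or pole at $-\beta/\alpha$ or $-\delta/\gamma$), a point the paper leaves implicit when invoking the Weil bound.
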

\begin{proof} Since otherwise the result follows from Theorem \ref{thm:main} we may restrict ourselves to the case $Ind(f) > q^{1/2}$ 
as well as to the case that neither $\alpha=\delta=0$ nor $\beta=\gamma=0$ by (\ref{CarlitzRankL}) and (\ref{CarlitzRankI}).

Let $\chi$ be a character of order $\ell$, then $\chi(a_i)=\rho$ for all $i=0,1,...,\ell-1$. (Note that some of the $a_i$ can be the same and we can do this also
if $\ell \ge |C_0|=(q-1)/\ell$, that is, $\ell>q^{1/2}$.) Under this condition $f(x)$ is a
permutation and we can estimate the number $N$ of $c$ satisfying
\begin{equation}\label{eqn:thm2.1}
R(c)=\frac{\alpha c+\beta}{\gamma c+ \delta}=f(c).
\end{equation}
If (\ref{eqn:thm2.1}) is true, then we have $\chi\big(R(c)\big)=\chi(c^r)\rho$. Since
\begin{equation*}
\theta(c)=\frac{1}{\ell} \sum_{i=0}^{\ell-1} \Big(\chi(\alpha c +\beta) 
\overline{\chi}(\gamma c +\delta)\overline{\chi}(c^r)\rho^{-1}\Big)^i=\left\{\begin{array}{cl} 1, & \chi\big(R(c)\big)=\chi(c^r)\rho,\\
                                0, & \chi\big(R(c)\big)\neq \chi(c^r)\rho,
                                \end{array}\right. 
                                \end{equation*}
for any $c$ with $c(\alpha c+\beta)(\gamma c+\delta)\ne 0$,
we have $$N \le\sum_{c\in \Fq^{*}\setminus \{-\beta\alpha^{-1},-\delta\gamma^{-1}\}} \theta(c),$$
and get
\begin{eqnarray*}
N&<& \frac{1}{\ell} \sum_{i=0}^{\ell-1} \sum_{c\in \Fq} \Big(\chi(\alpha c +\beta) \overline{\chi}(\gamma c +\delta)\overline{\chi}(c^r)\rho^{-1}\Big)^i\\
&=&\frac{q}{\ell}+ \max_{i=1,2,\ldots,\ell-1} \left|\sum_{c\in \Fq} \Big(\chi(\alpha c +\beta) \overline{\chi}(\gamma c +\delta)\overline{\chi}(c^r)\Big)^{i}\right|
\le\frac{q}{\ell}+ 2q^{1/2} \le  3q^{1/2}
\end{eqnarray*}
by the Weil bound, see for example \cite[Theorem 5.41]{LidNid97}, and the 
result follows by (\ref{rateq}). 
\end {proof}

\section{The discrete logarithm of $\F_p$}\label{DiscLog}

In this section we consider the following permutation over $\F_p$ where $p>2$ is a prime which represents up to $+1$ the discrete logarithm of $\F_p$,
\begin{equation}\label{disclog} f(\xi^i)=i+1,\quad i=0,1,\ldots,p-2,\quad f(0)=0,
\end{equation}
where $\xi$ is again a primitive element of $\F_p$. 
We show that any polynomial $f(x)$ representing this permutation has large degree, weight, Carlitz rank and index as well as small linearity
and invertibility.
\begin{thm}
 The unique polynomial $f(x)\in \F_p[x]$ of degree at most $p-1$ defined by the property
 (\ref{disclog}) satisfies
 $$\deg(f)=w(f)=p-2,$$
 $$Ind(f)=p-1,\quad p>3,$$
 $${\cal L}(f)<(2(p-2))^{1/2}+1,$$
 $${\cal I}(f)<2(p-2)^{1/2}+1,$$
 and
 $$Crk(f)>p-2(p-2)^{1/2}-1.$$
\end{thm}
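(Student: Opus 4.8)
The statement has five separate assertions about the polynomial $f(x)$ defined by the discrete logarithm permutation. I would attack them one at a time, since they rely on quite different techniques.

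For $\deg(f) = w(f) = p-2$, the plan is to use Lagrange interpolation / the explicit coefficient formula. Since $f$ is a permutation with $f(0)=0$, I would write $f(x) = \sum_{j=1}^{p-1} c_j x^j$ and compute the coefficients via $c_j = -\sum_{\xi^i} f(\xi^i)(\xi^i)^{-j}$ (using $\sum_{a\in\Fq^*} a^k = -1$ when $(q-1)\mid k$ and $0$ otherwise). Plugging in $f(\xi^i)=i+1$ turns each coefficient into a geometric-type sum $\sum_{i=0}^{p-2}(i+1)\xi^{-ij}$, which I can evaluate in closed form. The key point will be to check that every $c_j$ for $1\le j\le p-2$ is nonzero (giving $w(f)=p-2$) and that $c_{p-1}=0$ while $c_{p-2}\ne 0$ (giving $\deg(f)=p-2$); the coefficient $c_{p-1}$ vanishes because the top-degree term would force $f$ to miss a value. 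This is the most computational part but is routine once the summation identity is set up.

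For the index, $Ind(f)=p-1$ for $p>3$: here $\ell=p-1$ is the maximum possible index, and a cyclotomic mapping of index $p-1$ has $|C_0|=1$, meaning $f(x)=a_i x^r$ on each singleton coset — essentially no constraint — so I expect $Ind(f)=p-1$ to follow by showing $f$ cannot have any smaller index. The plan is to assume $Ind(f)=\ell<p-1$ and derive a contradiction: a cyclotomic mapping of index $\ell$ forces $f(\xi^i)/f(\xi^{i+\ell}) = \xi^{-r\ell}\cdot(a_i/a_{i+\ell})^{\pm}$ type relations, which I can compare against the actual values $f(\xi^i)=i+1$. Since the discrete logarithm values are additive (arithmetic) in $i$ while the cyclotomic structure is multiplicative, these are incompatible unless $\ell=p-1$. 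I would make this precise by picking two cosets and showing the required multiplicative ratio fails.

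The linearity, invertibility, and Carlitz rank bounds are all character-sum estimates and should be grouped together. For ${\cal L}(f)$ I must bound $|\{c: f(c)=ac\}|$; translating through the discrete log, $f(\xi^i)=a\xi^i$ means $i+1 \equiv \log_\xi(a\xi^i) = \log_\xi a + i \pmod{p-1}$, an incomplete sum of a multiplicative character against an additive character, estimable by the Weil / Pólya–Vinogradov bound to give the $(2(p-2))^{1/2}+1$ figure. The invertibility ${\cal I}(f)$ is analogous with $f(x)=c/x$. The Carlitz rank bound then follows immediately: since neither $\beta=\gamma=0$ nor $\alpha=\delta=0$ can make $f$ linear or inverse-linear (as its degree and weight are large), I invoke Theorem \ref{thm:main}, and the dominant term is ${\cal I}(f)<2(p-2)^{1/2}+1$, yielding $Crk(f)>p-2(p-2)^{1/2}-1$. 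The main obstacle I anticipate is setting up the right character-sum formulation for ${\cal L}$ and ${\cal I}$ so that the Weil bound applies cleanly — in particular handling the boundary/zero terms and the translation between the logarithmic and polynomial descriptions without losing the precise constants claimed.
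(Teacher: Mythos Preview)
Your plan for $\deg(f)=w(f)=p-2$ and $Ind(f)=p-1$ is essentially the paper's approach (the paper just cites the explicit coefficient formula from the literature, and for the index it evaluates $f(\xi^{i\ell})=a\xi^{ir\ell}$ at $i=0,1,2$ to force $(\xi^{r\ell}-1)^2=0$, then rules out $\ell=(p-1)/2$ separately).

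There is, however, a genuine gap in your treatment of $Crk(f)$. You propose to deduce the Carlitz rank bound from Theorem~\ref{thm:main}. But you have just shown $Ind(f)=p-1$, so the term $3\,Ind(f)=3(p-1)$ in the maximum of Theorem~\ref{thm:main} swamps everything and the resulting inequality is vacuous. The paper does \emph{not} go through Theorem~\ref{thm:main} here; instead it bounds directly the number $N$ of solutions of $f(c)=R(c)=(\alpha c+\beta)/(\gamma c+\delta)$ using the near-homomorphism property of the discrete logarithm: for $a\ne 1$ one has $f(ac)\in\{f(c)+f(a),\,f(c)+f(a)-1\}$. A pigeonhole argument over the $N(N-1)$ ordered pairs $(c,ac)$ of solutions produces some $a$ and some fixed $d\ne 0$ for which $R(ac)=R(c)+d$ has at least $(N-1)^2/(2(p-2))$ solutions; but this last equation is quadratic in $c$ (when $\gamma\ne 0$), so it has at most two solutions, giving $(N-1)^2\le 4(p-2)$. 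That is the source of the $2(p-2)^{1/2}+1$ in both the ${\cal I}(f)$ bound and the $Crk(f)$ bound.

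A secondary issue: your proposed character-sum route for ${\cal L}(f)$ is miscast. The equation $f(\xi^i)=a\xi^i$ reads $i+1=a\xi^i$ \emph{in $\F_p$}; you cannot take discrete logs of both sides as written, since the left side is already a residue, not an element whose logarithm you control. The paper obtains ${\cal L}(f)$ (the case $\gamma=0$) from the same elementary pairs argument above, specialised to linear $R$, which yields $N(N-1)\le 2(p-2)$ and hence ${\cal L}(f)<(2(p-2))^{1/2}+1$. No Weil or P\'olya--Vinogradov is needed, and the constants in the statement come out exactly from this counting.
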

\begin{proof}
By \cite{muwh} we have
$$f(c)=\sum_{i=1}^{p-2} (\xi^{-i}-1)^{-1}c^i,\quad c\in \F_p,$$
and the first result follows.

Next we estimate the index $\ell$ of $f(x)$. Assume $\ell\le (p-1)/3$. Then there is some $a\in \F_p^*$ with 
$$f(\xi^{i\ell})=a\xi^{ir\ell},\quad i=0,1,\ldots,(p-1)/\ell -1\ge 2.$$
Taking $i=0,1,2$ we get $a=1$, 
$$\ell +1 =\xi^{r\ell}\quad \mbox{and}\quad 2\ell +1=\xi^{2r\ell}$$
which implies
$$(\xi^{r\ell}-1)^2=0.$$
Since $\gcd(r,(p-1)/\ell)=1$, we get $\ell=p-1$, a contradiction.
Finally we have to exclude $\ell=(p-1)/2$. Otherwise we have
$$(p+1)/2=f(\xi^{(p-1)/2})=f(-1)=(-1)^r$$
which is impossible since $1<(p+1)/2<p-1$ for $p>3$. Hence, 
$$Ind(f)=p-1,\quad p>3.$$

The proof of \cite[Theorem 8.2]{sh} can be easily adapted to estimate the number~$N$ of solutions of 
$$R(c)=\frac{\alpha c+\beta}{\gamma c+\delta}=f(c)$$
in the case $\gamma\ne 0$. The case $\gamma=0$ follows directly from \cite[Theorem 8.2]{sh} and gives the bound
$$N(N-1)\le 2(p-2)$$
which implies
$${\cal L}(f)<(2(p-2))^{1/2}+1$$
and 
\begin{equation}\label{e1} N<(2(p-2))^{1/2}+1,\quad \gamma=0.
\end{equation}

Now consider the set 
$$D=\{2\le a\le p-1 : f(c)=R(c) \mbox{ and }f(ac)=R(ac) \mbox{ for some $c$}\}$$
of size $|D|\le p-2$. There are $N(N-1)$ pairs $(c,ac)$, $a\ne 1$, with
\begin{equation}\label{a} f(c)=R(c)\quad\mbox{and}\quad f(ac)=R(ac)
\end{equation}
and for some $a\in D$ (\ref{a}) has at least $N(N-1)/|D|>(N-1)^2/(p-2)$ solutions~$c$. For this $a$ 
$$R(ac)=f(ac)=f(c)+d=R(c)+d$$
has either for $d=f(a)\not\in \{0,1\}$ or for $d=f(a)-1\not\in \{-1,0\}$ at least $(N-1)^2/2(p-2)$ solutions~$c$.
On the other hand we have
$$R(ac)=\frac{\alpha ac+\beta}{\gamma ac +\delta}=\frac{\alpha c+\beta}{\gamma c+\delta}+d=R(c)+d$$
which implies a quadratic equation for $c$ since $ad\gamma\ne 0$ with at most $2$ solutions and thus
\begin{equation}\label{gammanot0}(N-1)^2\le 4(p-2),\quad \gamma\not= 0.
\end{equation}
 The case $\alpha=\delta=0$ provides the bound on the invertibility ${\cal I}(f)$.
The bound on the Carlitz rank follows by (\ref{rateq}), (\ref{e1}) and (\ref{gammanot0}).
\end{proof}

Finally, we note that the results of this section can be easily extended to arbitrary finite fields using the approach of \cite{wi}.

\section*{Acknowledgement}

The authors are supported by the Austrian Science Fund FWF Projects F5504 and F5511-N26, respectively, 
which are  part of the Special Research Program "Quasi-Monte Carlo Methods: Theory and Applications". 
L.I. would like to express her sincere thanks for the hospitality during her visit to RICAM. 
The authors also like to thank Igor Shparlinski for pointing to an improvement of an earlier version of Theorem 1 as well as  
Alev Topuzo\u glu for suggesting earlier research on Carlitz rank which motivated this paper and for all of her kind help.

\end{document}